\documentclass[11pt,reqno]{amsart}
\usepackage[T1]{fontenc}
\usepackage[margin=1.15in]{geometry}
\usepackage{amsmath,amssymb,amsthm,mathtools}
\usepackage{enumitem}
\usepackage[protrusion=true,expansion=false]{microtype}
\usepackage{xcolor}

\usepackage[colorlinks=true,linkcolor=blue,citecolor=blue,urlcolor=blue]{hyperref}
\hypersetup{pdftitle={Chern slopes with prescribed fundamental group},
            pdfauthor={Maycol Falla Luza},
            pdfsubject={14J29, 14J80, 14E20, 14J10}}

\theoremstyle{plain}
\newtheorem{theorem}{Theorem}[section]
\newtheorem{proposition}[theorem]{Proposition}
\newtheorem{lemma}[theorem]{Lemma}
\newtheorem{corollary}[theorem]{Corollary}
\newtheorem{conjecture}[theorem]{Conjecture}
\newtheorem{question}[theorem]{Question}
\theoremstyle{definition}
\newtheorem{definition}[theorem]{Definition}
\newtheorem{construction}[theorem]{Construction}

\numberwithin{equation}{section}

\newcommand{\PP}{\mathbb{P}}
\newcommand{\CC}{\mathbb{C}}
\newcommand{\OO}{\mathcal{O}}
\newcommand{\sig}{\sigma}
\DeclareMathOperator{\Supp}{Supp}
\DeclareMathOperator{\pr}{pr}
\DeclareMathOperator{\Div}{div}

\begin{document}

\title[Chern slopes with prescribed $\pi_1$]
{The Geography of Chern Slopes with Prescribed Fundamental Group}

\author{Maycol Falla Luza}
\address{Instituto de Matem\'atica e Estat\'istica, Universidade Federal Fluminense,
Rua Professor Marcos Waldemar de Freitas Reis s/n, 24210-201 Niter\'oi, RJ, Brazil}
\email{hfalla@id.uff.br}

\subjclass[2020]{Primary 14J29; Secondary 14J80, 14E20, 14J10}
\keywords{Geography of surfaces, Chern slopes, fundamental group,
ample canonical class}

\begin{abstract}
Let $G$ be the topological fundamental group of a nonsingular complex projective surface.
Troncoso and Urz\'ua proved that the Chern slopes $c_1^2/c_2$ of minimal surfaces of general
type $S$ with $\pi_1(S)\simeq G$ are dense in $[1,3]$, and left $[1/3,1)$ open. We prove that
they are dense in $[1/2,3]$, an interval that cannot be enlarged without contradicting either
a theorem of Mendes Lopes and Pardini or Reid's conjecture. We prove more: the slopes of such
surfaces with $K_S$ \emph{ample} are dense in $[1/2,2]$, the first case in which the
conjecture of Troncoso and Urz\'ua on ample canonical classes is established. The tool is an
exact ampleness criterion for their product construction, which shows in particular that
their own surfaces never have ample canonical class, whatever the defining sections.
\end{abstract}

\maketitle

\section{Introduction}\label{sec:intro}

\subsection{The geography problem with prescribed \texorpdfstring{$\pi_1$}{pi\_1}}
We work over $\CC$. For a minimal nonsingular projective surface $S$ of general type, the
Chern numbers $c_1^2(S)=K_S^2$ and $c_2(S)=e(S)$ are positive and satisfy the Noether
inequality $\tfrac15(c_2-36)\le c_1^2$ and the Bogomolov--Miyaoka--Yau inequality
$c_1^2\le3c_2$. The \emph{Chern slope} of $S$ is the ratio $c_1^2(S)/c_2(S)$.

The geography problem asks which slopes occur under prescribed constraints. Roulleau and
Urz\'ua \cite{RU} proved that the slopes of \emph{simply connected} minimal surfaces of
general type are dense in $[2,3]$; combined with earlier constructions
\cite{Per,Ch,PPX,Urz10} this gives density in $[1/5,3]$, the largest interval allowed by the
two inequalities above. Troncoso and Urz\'ua \cite{TU} fixed the fundamental group instead:

\begin{theorem}[{\cite[Thm.~1.1]{TU}}]\label{thm:TUmain}
Let $G$ be the topological fundamental group of a nonsingular complex projective surface.
Then the Chern slopes of minimal nonsingular projective surfaces of general type $S$ with
$\pi_1(S)\simeq G$ are dense in $[1,3]$.
\end{theorem}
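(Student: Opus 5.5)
The natural route is the product construction of \cite{TU}. Fix a nonsingular projective surface $X$ with $\pi_1(X)\simeq G$; we may assume $X$ is of general type, since products with a curve and Lefschetz hyperplane arguments let us replace any such $X$ by one of general type with the same $\pi_1$. Take a second surface $Y$, simply connected, carrying a cyclic cover structure with prescribed Chern slope. A good source is the Roulleau--Urz\'ua surfaces \cite{RU}, whose slopes are dense in $[2,3]$, or more flexibly a $p$-th root cover $Y\to Z$ branched along a highly singular arrangement.

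Inside $X\times Y$, consider a divisor defined as the zero locus of a general section of $\pr_1^*L\otimes\pr_2^*M$, or a cyclic cover branched along it. One may also take a complete intersection $S$ of two very ample divisors in $X\times Y$, or a fibered-product-type cover. The Lefschetz hyperplane theorem, in its quasi-projective or Goresky--MacPherson form, gives $\pi_1(S)\simeq\pi_1(X\times Y)=G\times\pi_1(Y)=G$.

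The Chern numbers of $S$ are computed by adjunction in terms of the intersection numbers of $K_X,L$ and $K_Y,M$. The key step is to let the data on the $Y$ factor dominate, by taking $Y$ of large degree and $L$ fixed. Then $c_1^2(S)/c_2(S)$ tends to an explicit function of the slope of $Y$ and the ratios $M^2:K_YM:K_Y^2$. As these vary, the limit sweeps out a set containing $[1,3]$. Slopes near $3$ come from $Y$ with slope near $3$ (the RU ball-quotient-like covers), and slopes near $1$ come from degenerate choices. One then checks that $S$ is minimal of general type: $K_S$ is the restriction of an ample-plus-nef class, hence nef, and it is big.

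The main obstacle is controlling $\pi_1$ while pushing the slope up towards $3$. Complete intersections in products tend to have slope bounded well below $3$, so reaching the top of the interval requires that the $Y$-part be a cover whose branch data carries almost all of $c_1^2$ and $c_2$. Density near $3$ also has to survive the $X$-contribution, which is of lower order. I would first establish the asymptotic formula and verify that, in the limit, $S$ inherits the slope of $Y$. Density on $[1,3]$ then follows from \cite{RU} together with interpolation via varying $M$.
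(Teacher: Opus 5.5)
There is a genuine gap at the lower end of the interval. Expanding $c(\sigma,B)$ in Theorem~\ref{thm:TU42}(2) gives $c_1^2(S)=c_1^2(X)B^2+c_1^2(Y)\sigma^2+24\sigma^2B^2+(\text{mixed})$ and $c_2(S)=c_2(X)B^2+c_2(Y)\sigma^2+18\sigma^2B^2+(\text{mixed})$. The mixed terms are $(\sigma\cdot K_X)B^2$, $(B\cdot K_Y)\sigma^2$ and $(\sigma\cdot K_X)(B\cdot K_Y)$, with coefficients $12,12,4$ in $c_1^2$ and $6,6,2$ in $c_2$. Hence $c_1^2(S)/c_2(S)$ is a weighted average, with nonnegative weights, of four numbers: the two input slopes, $24/18=4/3$, and $2$. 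No choice of bundles, ``degenerate'' or otherwise, can push the slope of $S$ below the minimum of the input slopes and $4/3$.

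Your simply connected factor is an RU surface, with slope in $[2,3]$. Your $G$-factor is an arbitrary surface whose slope you never control. So ``interpolation via varying $M$'' yields nothing in $[1,4/3)$ in general, and the claim that the limit sweeps out a set containing $[1,3]$ is false. The missing ingredient is the actual content of \cite{TU}: simply connected $p$-th root covers $X_p$ whose own slopes are already dense in $[1,3]$. The product construction then merely transfers that slope to $S$. The alternative, used for Theorem~\ref{thm:main} here, is to feed in simply connected surfaces of slope below $1/2$ and interpolate with the $4/3$ and $2$ terms.

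There is a second problem at the top of the interval. For the slope of the root-cover factor to be inherited, the bundle on that factor (your $M$) must satisfy $M^2=o(c_2)$ and $M\cdot K=o(c_2)$. If $M$ is very ample, as in your ``complete intersection of two very ample divisors'', this is exactly Question~\ref{q:va}, which is open. \cite{TU} avoid it by taking $\sigma_p$ to be the pullback of a very ample class under the generically finite covering morphism. This bundle is lef of exponent $1$ but not ample, with $\sigma_p^2=p$ and $\sigma_p\cdot K_{X_p}=O(p^3)$ against $c_i(X_p)=O(p^5)$. It is precisely for such non-ample bundles that the Goresky--MacPherson theorem for semismall maps is needed to get $\pi_1(S)\simeq G$. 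In that setting $K_S$ is only big and nef, not the restriction of an ample-plus-nef class. Your mention of Goresky--MacPherson points the right way, but choosing this small lef bundle is what makes slopes near $3$ reachable, and your proposal never makes that choice.
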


By the Lefschetz hyperplane theorem the groups $G$ so admissible are exactly the fundamental
groups of nonsingular projective varieties, a class containing every finite group by Serre's
construction \cite{Ser}; the theorem is therefore a statement about a very large family.

The lower endpoint $1$ is not known to be optimal, but little room is left below it. Mendes
Lopes and Pardini \cite{MLP} imply that a minimal surface of general type with
$c_1^2<\frac13c_2$ and finite fundamental group has $|\pi_1|\le9$; as every finite group is
the fundamental group of a nonsingular projective surface, density in $[1/5,3]$ for arbitrary
$G$ is false and $[1/3,3]$ is the largest conceivable interval. Second, what may happen
below $1/2$ is governed by the following.

\begin{conjecture}[Reid; see {\cite[p.~294]{BHPV}}]\label{conj:reid}
Let $S$ be a minimal surface of general type with $c_1^2(S)<\frac12c_2(S)$. Then $\pi_1(S)$
is either finite or commensurable with the fundamental group of a compact Riemann surface.
\end{conjecture}

Taking for $G$ the fundamental group of a fake projective plane, which is neither, a
density statement below $1/2$ for arbitrary $G$ would contradict Conjecture
\ref{conj:reid}. Thus $[1/2,3]$ is the largest interval compatible with current
expectations. The value $1/2$ is moreover sharp for the conjecture itself: Keum
\cite{Keum} constructs a minimal surface of general type with $c_1^2=4$ and $c_2=8$ ---
slope exactly $1/2$ --- whose fundamental group $\mathbb{Z}^4\rtimes(\mathbb{Z}/2\mathbb{Z})^2$
is infinite and not commensurable with the fundamental group of a curve.

All the surfaces produced in \cite{RU,TU} contain $(-2)$-curves, so their canonical classes
are big and nef but not ample. Accordingly \cite{TU} pose:

\begin{conjecture}[{\cite[Conj.~4.5]{TU}}]\label{conj:45}
For $G$ as above, the Chern slopes of minimal nonsingular projective surfaces of general type
with $\pi_1(S)\simeq G$ and ample canonical class are dense in $[1,3]$.
\end{conjecture}

\subsection{Results}
Our first result extends the interval of Theorem \ref{thm:TUmain} from $[1,3]$ to $[1/2,3]$.

\begin{theorem}\label{thm:half3}
Let $G$ be the topological fundamental group of a nonsingular complex projective surface.
Then the Chern slopes $c_1^2(S)/c_2(S)$ of minimal nonsingular projective surfaces $S$ of
general type with $\pi_1(S)\simeq G$ are dense in $[1/2,3]$.
\end{theorem}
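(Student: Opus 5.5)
Since Theorem \ref{thm:TUmain} already gives density in $[1,3]$, it suffices to produce, for each $r\in[1/2,1]$ and $\epsilon>0$, a minimal surface of general type $S$ with $\pi_1(S)\simeq G$ and $|c_1^2(S)/c_2(S)-r|<\epsilon$. I will use the product construction of \cite{TU}. Fix a nonsingular projective surface $X$ with $\pi_1(X)\simeq G$, and take a product-type surface $X\times C$ or a fibred surface. Then take a cyclic (or bidouble) cover branched along a suitable divisor $D$, where $D$ is cut out by sections of very ample line bundles. The branch locus is chosen ample and sufficiently general, so a Lefschetz/Nori-type argument shows that the cover does not change $\pi_1$. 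In \cite{TU} this is done with divisors having controlled singularities, and the slopes are computed as explicit rational functions of the degrees and the cover order. Their limiting slopes sweep out $[1,3]$. My plan is to rerun that computation with a second free parameter that pushes the slope down.

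Concretely, the steps are as follows.

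\textbf{Step 1.} Set up a family: a smooth surface $Y$ with $\pi_1(Y)\simeq G$ (for instance $X$ blown up, or a complete intersection in $X\times\PP^N$), together with a cyclic cover $S\to Y$ of degree $p$ branched along a smooth ample divisor $B\in|pL|$. The standard formulas give
\[
K_S^2=p\,(K_Y+(p-1)L)^2,\qquad e(S)=p\,e(Y)-(p-1)\,e(B).
\]

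\textbf{Step 2.} Choose $L=aH$ with $H$ ample and let $a\to\infty$ with $p$ fixed. Both Chern numbers are then dominated by their $a^2$ terms, and a direct computation gives
\[
\frac{c_1^2}{c_2}\;\longrightarrow\;\frac{(p-1)^2}{2p^2-3p+1}=\frac{p-1}{2p-1}.
\]
This equals $1/3$ for $p=2$ and increases to $1/2$ as $p\to\infty$. Alone it is too low, so I interpolate.

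\textbf{Step 3.} Mix this with a high-slope ingredient, either by iterating covers or by taking the cover over a base $Y$ whose own slope is near $3$ (for example a ball quotient times data, or one of the \cite{TU} surfaces). Varying the relative sizes of $a$, $p$ and the base invariants, the limiting slope becomes a continuous function of a real parameter ranging over an interval containing $[1/2,1]$. Density then follows by rational approximation of the parameter.

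\textbf{Step 4.} Check minimality and general type. This is automatic because $K_S$ is the pullback of $K_Y+(p-1)L$, which is ample once $a\gg0$. Check also $\pi_1(S)\simeq\pi_1(Y)$: for a cyclic cover totally branched along a smooth ample $B$ with $Y\setminus B$ having $\pi_1$ surjecting appropriately, Nori's lemma or the Lefschetz-type results already used in \cite{TU} give the isomorphism.

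The main obstacle is Step 3 together with the $\pi_1$ control. The naive cyclic covers approach slopes in $[1/3,1/2)$, the wrong side, so the interpolation has to produce every value in $[1/2,1]$ while the branch divisor stays ample enough for the fundamental-group argument. I would first try a two-step bidouble cover, one branch in $|aH|$ and one in $|bH|$ with $a/b$ varying. Its slope is an explicit rational function of $t=a/b$, and I expect its range to cover $[1/2,1]$.
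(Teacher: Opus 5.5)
You reduce the statement to $[1/2,1]$ using Theorem~\ref{thm:TUmain} for $[1,3]$, which is exactly how the paper concludes. The gap is that you never supply a family filling $[1/2,1]$. Step~3 is only an expectation, and the computations meant to guide it are wrong.

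\textbf{Step 2 is miscomputed.} From your own Step~1 formulas with $L=aH$, the leading terms are $K_S^2\sim p(p-1)^2a^2H^2$ and $e(S)\sim (p-1)p^2a^2H^2$. So the limiting slope is $(p-1)/p$, not $(p-1)/(2p-1)$. It equals $1/2$ for $p=2$ (compare Proposition~\ref{prop:props}(d), where $4uv/(8uv+12u+12v+24)\to 1/2$) and increases to $1$. Cyclic covers are therefore not on the ``wrong side''. The real obstruction is that, over a fixed base with $L=aH$, their slopes accumulate only at the discrete set $\{(p-1)/p\}\cup\{1\}$, so they give no density.

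\textbf{Your fallback does not close the gap either.} Suppose the curves in $|aH|$ and $|bH|$ are the only branch curves, as your wording suggests, so that $S$ is a fibre product of two double covers. Then $K_S^2\sim (a+b)^2H^2$ and $e(S)\sim\big(2(a^2+b^2)+ab\big)H^2$. Their ratio is at most $4/5$, attained at $a=b$, so the interval $(4/5,1)$ is never reached.

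\textbf{What is missing, and how the paper supplies it.} You need a second continuous parameter whose range reaches $1$. The paper takes it from the product construction of Theorem~\ref{thm:TU42}:
\begin{itemize}
\item $X$ is a simply connected double cover of $\PP^1\times\PP^1$, and $\sig$ is chosen comparable to $K_X$ rather than negligible.
\item $B=mB_0$ on a surface $Y$ with $\pi_1(Y)\simeq G$ and $K_Y$ nef.
\item The limit $\Lambda(X,\sig)$ of Lemma~\ref{lem:Lambda} tends to $\mu(y)=(1+6y)/(2+3y)$, which sweeps $(1/2,2)$.
\item $\pi_1$ is controlled by Theorem~\ref{thm:TU42}(1), ampleness of $K_S$ by Corollary~\ref{cor:easy}, and the statement follows by taking the union with $[1,3]$.
\end{itemize}

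\textbf{A repair within your own framework.} A third branch curve would suffice. Take the fibre product of three double covers of a fixed $Y$ with $\pi_1(Y)\simeq G$, branched along general members of $|2a_iH|$ with $H$ very ample. Then $K_S=f^*(K_Y+\sum a_iH)$, which is ample for large $a_i$. The limiting slope is $(P+2Q)/(2P+Q)$ with $P=\sum a_i^2$ and $Q=\sum_{i<j}a_ia_j$. Since $Q/P$ takes a dense set of values in $(0,1]$, these slopes are dense in $[1/2,1]$.

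The fundamental group would be preserved by applying a one-curve lemma three times. Each new branch curve is the pullback of a general smooth ample curve, hence smooth, ample and connected. The meridian of a smooth ample curve is central in the complement, and this forces a totally branched cyclic cover to induce an isomorphism on $\pi_1$. You would still need to state and prove that lemma for arbitrary $G$, which your ``Nori-type argument'' only gestures at.
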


Of the range $[1/3,1)$ left open in \cite{TU}, this settles $[1/2,1)$ and leaves only
$[1/3,1/2)$, which is exactly the region governed by Reid's conjecture.

Theorem \ref{thm:half3} is deduced from Theorem \ref{thm:TUmain} together with the following
sharper statement, which is where the work lies. It carries the additional conclusion that
the canonical class is ample --- a property that no surface produced in \cite{RU,TU} has.

\begin{theorem}\label{thm:main}
Let $G$ be as above. Then the Chern slopes $c_1^2(S)/c_2(S)$ of minimal nonsingular
projective surfaces $S$ of general type with
\[
\pi_1(S)\simeq G\qquad\text{and}\qquad K_S\ \text{ample}
\]
are dense in $[1/2,2]$.
\end{theorem}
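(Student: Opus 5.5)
The plan is to realize $S$ as a smooth complete intersection of two ample divisors in a product fourfold $W=X\times Y$. Here $X$ is a fixed nonsingular projective surface with $\pi_1(X)\simeq G$, and $Y$ is a simply connected minimal surface of general type, chosen from the families of \cite{RU} and the constructions quoted in the introduction. Write $A=\pr_1^*H_X$ and $B=\pr_2^*H_Y$ for pullbacks of very ample classes.

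Take $D_1,D_2$ general members of $|a_iA+b_iB|$ with $a_i,b_i\ge1$ and large enough that all the relevant linear systems are very ample, and set $S=D_1\cap D_2$. Bertini gives smoothness. Two applications of the Lefschetz hyperplane theorem (first to $D_1\subset W$, then to $S\subset D_1$) give
\[
\pi_1(S)\simeq\pi_1(W)\simeq\pi_1(X)\times\pi_1(Y)\simeq G .
\]
Ampleness of $K_S$ is automatic once $a_i,b_i$ are large. Indeed, adjunction gives $K_S=(K_W+D_1+D_2)|_S$, and $K_W+D_1+D_2=\pr_1^*(K_X+(a_1+a_2)H_X)+\pr_2^*(K_Y+(b_1+b_2)H_Y)$ is ample. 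Hence $S$ is minimal of general type with ample canonical class, and all the work is in the geography.

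The Chern numbers come from $c(S)=c(W)/\bigl((1+D_1)(1+D_2)\bigr)$ restricted to $S$, evaluated with $A^2B^2=H_X^2H_Y^2$, $A^4=B^4=A^3B=AB^3=0$, and the Chern classes of $X$ and $Y$. The leading terms are
\[
K_S^2\approx D_1D_2(D_1+D_2)^2,\qquad c_2(S)\approx D_1D_2(D_1^2+D_1D_2+D_2^2).
\]
At this order the choice $D_1\approx A$, $D_2\approx B$ gives slope $2$, the upper endpoint. The family $D_1=A+tB$, $D_2=A+sB$ gives slopes $(5(t+s)^2+4ts)/(4(t+s)^2-ts)$, which sweep $[5/4,4/3]$. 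At leading order, therefore, one only obtains roughly $[5/4,2]$.

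To reach down to $1/2$ I will use the lower-order terms. I would try first the unbalanced regime: keep $D_1=a A+bB$ with $a,b$ fixed, and let $D_2=n\,B$ (or $nB+A$) with $n\to\infty$. Then $S$ is fibred over curves of $Y$, and the $n$-linear parts of $K_S^2$ and $c_2(S)$ pick up $c_1^2(Y),c_2(Y)$ and $K_X\cdot H_X$, $c_2(X)$ with fixed coefficients. Consequently the limit slope is an explicit fractional-linear function of $(a,b)$ and of the slope of $Y$. Since the slopes of simply connected $Y$ are dense in $[1/5,3]$, and since by \cite{MLP} and Conjecture \ref{conj:reid} one should expect exactly $1/2$ as the infimum, I expect this function to have infimum $1/2$ as $c_1^2(Y)/c_2(Y)\to1/5$ and $b/a\to0$, and supremum $2$ in the balanced product regime.

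Density in $[1/2,2]$ then follows by interpolating. The slope is a continuous function of the rational parameters $(a_i,b_i)$ together with the chosen $Y$, so a connectedness argument on the parameter region makes the set of attained slopes dense between the two extremes.

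The main obstacle is precisely this lower-order computation: showing that the full polynomial expressions for $K_S^2$ and $c_2(S)$, and not just their leading terms, reach slopes arbitrarily close to $1/2$ while keeping every divisor ample. If the plain product $X\times Y$ does not get below $1$, the fallback is to replace $Y$ by a cyclic cover of $\PP^2$ branched along a line arrangement, as in \cite{TU}, or to take a cyclic cover of $S$ branched along a further section. Either gives an extra degree of freedom that lowers $K^2$ relative to $c_2$. In that case the ampleness criterion for such covers has to be checked directly rather than read off from adjunction.
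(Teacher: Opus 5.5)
There is a genuine gap. The setup is fine: two ample divisors in $X\times Y$, Lefschetz twice, and adjunction. It even contains the paper's construction, which takes $D_1,D_2\in|\pr_1^*\sig+\pr_2^*B|$ with $\sig$ ample. But the theorem is entirely about slopes in $[1/2,5/4)$, and there you only state an expectation. Worse, the regime you propose provably cannot reach them.

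Take $D_1=aA+bB$ and $D_2=A+nB$, and write $d_X=H_X^2$, $d_Y=H_Y^2$. The leading terms are of order $n^2$, not $n$:
\[
K_S^2=a\,d_Y\bigl(2K_X\cdot H_X+(2a+3)d_X\bigr)n^2+O(n),\qquad
c_2(S)=a\,d_Y\bigl(K_X\cdot H_X+(a+3)d_X\bigr)n^2+O(n).
\]
The invariants $c_1^2(Y)$ and $c_2(Y)$ enter only the constant terms. So the limit slope does not depend on $Y$ or on $b$. It is at least $(2a+3)/(a+3)\ge 5/4$ as soon as $K_X\cdot H_X\ge0$, for instance whenever $K_X$ is nef.

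The variant $D_2=nB$ gives limit $2$, and it also breaks the fundamental group. A general member of $|nB|$ is $X\times C$ with $C\in|nH_Y|$, so $\pi_1(S)\simeq\pi_1(X)\times\pi_1(C)$.

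Consequently, choosing $Y$ near the Noether line has no effect. The "connectedness argument" cannot replace an explicit family either, since $Y$ ranges over a discrete set. A smaller slip: for $D_1=A+tB$, $D_2=A+sB$ the leading $c_2$ coefficient is $4(t+s)^2+2ts$, not $4(t+s)^2-ts$; the case $t=s$ must give $4/3$.

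The missing idea is the right regime together with the right input. Send the bundle on the $G$-surface to infinity in \emph{both} divisors: $D_1=D_2=mA+\pr_2^*\sig$ with $\sig$ a fixed ample class on the simply connected factor. The limit is then
\[
\Lambda(Y,\sig)=\frac{c_1^2(Y)+24\sig^2+12\,\sig\cdot K_Y}{c_2(Y)+18\sig^2+6\,\sig\cdot K_Y},
\]
a mediant of the slope of $Y$ and a ratio lying in $[4/3,2]$. In this regime the coefficient on $Y$ stays bounded, so your ampleness argument must use that $K_Y$ is nef, not that the $b_i$ are large.

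Even so, a small slope of $Y$ is not what produces $1/2$. You need a family where $\sig^2$ is negligible while $\sig\cdot K_Y$ is a tunable fraction of $c_2(Y)$. The paper uses double covers $\pi$ of $\PP^1\times\PP^1$ branched along $(2A+4,2v+4)$, with $\sig=\pi^*(a,1)$ and $v\to\infty$. This gives $\mu(y)=(1+6y)/(2+3y)$ with $y=\lim a/A$, an explicit homeomorphism of $(0,\infty)$ onto $(1/2,2)$. The endpoint $1/2$ comes from input slopes $4A/(8A+12)\to1/2$. By contrast, for the Noether-line member $A=1$ of the same family the limit $(4+24a)/(20+12a)$ never drops below $7/8$.
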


For $G$ trivial the slope range of Theorem \ref{thm:main} was already reached by
Persson \cite[Thm.~3]{Per}, who realises every \emph{pair} $(\chi,c_1^2)$ in a region
covering it by a simply connected minimal surface of general type; what is new here is the
prescribed fundamental group together with the ample canonical class.

Theorem \ref{thm:main} establishes Conjecture \ref{conj:45} on $[1,2]$ --- the first case in
which it is known --- and extends it below its stated range, down to $1/2$. On $[2,3]$ the conjecture remains open, and \S\ref{sec:limits} reduces it there to a single
question about the surfaces of \cite{RU}.

Everything goes through the
construction of \cite[\S4]{TU}, recalled in \S\ref{sec:prelim}: given nonsingular projective
surfaces $X,Y$ with $K_X,K_Y$ nef and $K_X^2>0$, a very ample line bundle $B$ on $Y$ and a
lef line bundle $\sig$ of exponent $1$ on $X$, one obtains a nonsingular projective surface
$S\subset X\times Y$, cut out by two general members of $|M|$ with
$M=\pr_1^*\sig\otimes\pr_2^*B$, satisfying $\pi_1(S)\simeq\pi_1(X)\times\pi_1(Y)$ and $K_S$
big and nef. What that construction leaves undecided is when $K_S$ is ample.

\begin{theorem}\label{thm:criterion}
In the above situation, $K_S$ is ample if and only if $X$ contains no irreducible curve
$\Gamma$ with
\[
K_X\cdot\Gamma=0\quad\text{and}\quad\sig\cdot\Gamma=0 .
\]
\end{theorem}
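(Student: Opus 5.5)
By adjunction, $S$ is the complete intersection of two general members of $|M|$ in $X\times Y$, so
\[
K_S=\bigl(\pr_1^*(K_X+2\sig)\otimes\pr_2^*(K_Y+2B)\bigr)\big|_S .
\]
The plan is to apply Nakai--Moishezon. Since $K_S$ is already big and nef, it is ample if and only if $K_S\cdot C>0$ for every irreducible curve $C\subset S$. For such a curve, write $p=\pr_1|_C$ and $q=\pr_2|_C$. Then
\[
K_S\cdot C=(K_X+2\sig)\cdot p_*C+(K_Y+2B)\cdot q_*C .
\]
Here $K_Y$ is nef and $B$ is very ample, so $K_Y+2B$ is ample. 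Hence the second term is positive unless $q(C)$ is a point. Likewise $K_X$ is nef and $\sig$ is lef, in particular nef, so the first term is nonnegative.

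\emph{Ample $\Rightarrow$ no $\Gamma$.} Suppose $K_S\cdot C=0$. Then $C$ must be contracted by $\pr_2$, say $C\subset X\times\{y\}$, and $p_*C=d\,\Gamma$ with $K_X\cdot\Gamma=\sig\cdot\Gamma=0$ by nefness of both. So if no such $\Gamma$ exists, $K_S$ is ample.

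\emph{No $\Gamma$ $\Rightarrow$ ample.} Conversely, suppose such a $\Gamma$ exists. I want to produce a curve of the form $\Gamma\times\{y\}$ inside $S$. Restricted to $X\times\{y\}\cong X$, the bundle $M$ becomes $\sig$. So a section $s$ of $M$, restricted to $\Gamma\times\{y\}$, is a section of $\sig|_\Gamma$. This bundle has degree $0$ on $\Gamma$ (interpreting $\sig\cdot\Gamma$ via the normalization). The exponent-$1$ hypothesis on $\sig$ is what controls such restrictions. I will use the fact, which I expect to be the mechanism of \cite[\S4]{TU}, that the defining sections have the shape $s=\sum_i a_i\otimes b_i$ with $a_i\in H^0(X,\sig)$ and $b_i\in H^0(Y,B)$. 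Since $\sig$ is globally generated on $X$ (lef), the $a_i|_\Gamma$ are sections of a degree-$0$ globally generated bundle on $\Gamma$. Such a bundle is trivial on the normalization of $\Gamma$, so the $a_i|_\Gamma$ are constants $c_i$.

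Thus $s|_{\Gamma\times\{y\}}$ vanishes identically exactly when $\sum_i c_i b_i(y)=0$, a single linear condition on $y$. The two defining sections $s_1,s_2$ give two such conditions, i.e.\ two sections of $B$ on the surface $Y$. Since $B$ is very ample, hence ample, these vanish simultaneously at $B^2>0$ points counted with multiplicity, and in particular at some point $y$. For that $y$ the curve $\Gamma\times\{y\}$ lies in $S$, and by the formula above $K_S\cdot(\Gamma\times\{y\})=0$. So $K_S$ is not ample.

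The main obstacle is this converse direction: making rigorous that every section of $M$ is constant along $\Gamma\times\{y\}$. This requires $h^0(\Gamma,\sig|_\Gamma)$ to be spanned by restrictions, with $\sig|_\Gamma$ trivial, and it must handle a singular $\Gamma$ via its normalization. I would also check that the two induced sections of $B$ are not forced to be proportional, though even if they were, their common zero set would be a curve and still nonempty. I would first try to extract the triviality from the exact lef/exponent-$1$ definition recalled in \S\ref{sec:prelim}: $\sig$ globally generated with $\sig^2>0$, so the morphism $\phi_\sig$ contracts exactly the curves of zero $\sig$-degree. Then $\sig|_\Gamma=\phi_\sig^*\OO(1)|_\Gamma$ is trivial because $\phi_\sig(\Gamma)$ is a point.
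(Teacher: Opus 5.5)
Your proposal is correct and is essentially the paper's proof. Both apply Nakai--Moishezon to $K_S\sim\pr_1|_S^*(K_X+2\sig)+\pr_2|_S^*(K_Y+2B)$, and for the converse both produce the forced curve $\Gamma\times\{y_0\}$ from the triviality of $\sig|_\Gamma$ (your last paragraph is essentially Lemma~\ref{lem:triv}) together with the fact that two members of $|B|$ must meet because $B^2>0$; your K\"unneth section $\sum_i c_ib_i$ is just the paper's $t=s(x_0,\cdot)$.

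Two small points. Your direction headings are swapped: the first block proves ``no $\Gamma\Rightarrow$ ample'' and the second proves ``$\Gamma$ exists $\Rightarrow$ not ample''. Also, the worry about $H^0(\Gamma,\sig|_\Gamma)$ being spanned by restrictions is moot, since once $\sig|_\Gamma\simeq\OO_\Gamma$ every section on the reduced irreducible projective curve $\Gamma$ is constant, with no normalization needed.
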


The two directions play different roles. Sufficiency is all that Theorem \ref{thm:main} needs,
and only through the special case that $K_X$ ample implies $K_S$ ample (Corollary
\ref{cor:easy}); it is an application of the Nakai--Moishezon criterion. Necessity is the
substantial half, and it gives:

\begin{corollary}\label{cor:never}
For the surfaces $S_p$ constructed in \cite[Thm.~4.3]{TU}, the canonical class $K_{S_p}$ is
never ample, for any choice of the defining sections.
\end{corollary}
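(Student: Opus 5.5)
The plan is to apply the necessity half of Theorem~\ref{thm:criterion}. For the surfaces $S_p$ of \cite[Thm.~4.3]{TU} it then suffices to exhibit, on the factor $X$ used there, one irreducible curve $\Gamma$ with $K_X\cdot\Gamma=0$ and $\sig\cdot\Gamma=0$. The statement says ``for any choice of the defining sections''. This is automatic once the curve is found, because the criterion involves only the pair $(X,\sig)$ and not the two general members of $|M|$ cutting out $S_p$. So the proof reduces to checking numerical properties of the input data of \cite[Thm.~4.3]{TU}.

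First I would recall precisely which $X$ and $\sig$ enter \cite[Thm.~4.3]{TU}. There $X$ is a surface coming from the simply connected constructions (the surfaces of \cite{RU}, or cyclic covers branched along line arrangements). Such an $X$ is minimal of general type, and its canonical class is big and nef but not ample, because it contains $(-2)$-curves coming from resolving the $A_n$-type singularities over the multiple points of the branch locus. Any such $(-2)$-curve $\Gamma$ satisfies $K_X\cdot\Gamma=0$ by adjunction, since $\Gamma^2=-2$ and $p_a(\Gamma)=0$. This settles the first condition.

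The second condition, $\sig\cdot\Gamma=0$, is where I expect the real work to lie. A lef bundle need not be trivial on every $(-2)$-curve, so this must come from the specific $\sig$ in \cite{TU}. I expect the lef line bundle of exponent $1$ there to be a pullback $\sig=\phi^*L$ along the natural morphism $\phi\colon X\to Z$. Here $Z$ is the singular cover (or its image in projective space) and $L$ is ample, typically a pullback of $\OO_{\PP^2}(1)$ or of an ample class on the canonical model. Since $\phi$ contracts the exceptional $(-2)$-curves to points, the projection formula gives $\sig\cdot\Gamma=L\cdot\phi_*\Gamma=0$. This is consistent with lefness: $\phi$ is semismall, contracting only finitely many curves, so such a $\sig$ is lef but not ample. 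That is presumably the reason \cite{TU} used lef rather than ample bundles.

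So the steps are: (i) identify $(X,\sig)$ in \cite[Thm.~4.3]{TU}; (ii) locate a $(-2)$-curve $\Gamma$ on $X$ contracted by the morphism defining $\sig$; (iii) check $K_X\cdot\Gamma=0$ and $\sig\cdot\Gamma=0$; (iv) apply Theorem~\ref{thm:criterion} to conclude that $K_{S_p}$ is not ample. The main obstacle is step (ii): one must verify from the construction that the relevant $(-2)$-curves are exceptional for the map defining $\sig$. If $\sig$ there were instead some globally generated bundle not obviously factoring through the contraction, I would first try to show that $\sig$ is numerically a multiple of $K_X$ plus pullbacks from the base. Failing that, I would argue via semismallness, using that the locus where $\sig$ is not ample is exactly the union of the $(-2)$-configurations.
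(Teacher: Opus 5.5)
Your proposal is correct and follows essentially the paper's argument: a $(-2)$-curve $R$ from a resolved cyclic quotient singularity has $K_{X_p}\cdot R=0$ by adjunction, and $\sig_p=f^*L$ is pulled back along a morphism contracting $R$, so the projection formula gives $\sig_p\cdot R=0$ and the necessity half of Theorem~\ref{thm:criterion} applies for every choice of sections. The paper fills in your steps (i)--(ii) in two ways: it takes $f\colon X_p\to Y_n$ to be the degree-$p$ map to the blown-up plane, not the singular cover, and it secures the existence of such an $R$ from the $t_{2,1}>0$ nodes of the branch divisor with equal branch multiplicities, over which the singularity is of type $\frac1p(1,p-1)$ and resolves entirely into $(-2)$-curves; this matters because a general $\frac1p(1,q)$ chain can contain curves with $K\cdot R>0$.
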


The ingredients of Theorem \ref{thm:criterion} are classical, but neither the equivalence
itself nor the unconditional obstruction of Corollary \ref{cor:never} appears in the
literature.

\subsection{Strategy}\label{ss:strategy}
Two changes to \cite{TU} suffice. The first is the input. There, $X_p$ is a $p$-th root cover
branched along a large arrangement of curves, and its $(-2)$-curves come from resolving the
cyclic quotient singularities over the nodes of that arrangement; branching instead along a
\emph{smooth} curve leaves no singularity to resolve, hence no $(-2)$-curve at all, and makes
$K_X$ the pullback of an ample class under a finite morphism, hence ample. The simplest such
input is a double cover of $\PP^1\times\PP^1$.

The second is the auxiliary bundle. In \cite{TU} one has $\sig^2=p$ and
$\sig\cdot K_{X_p}=O(p^3)$ against $c_i(X_p)=O(p^5)$, so every term involving $\sig$ is
asymptotically negligible and the slope of $S$ is that of $X$ in the limit. There is no
reason to impose this: taking $B=mB_0$ with $B_0$ very ample and $m\to\infty$, the formulas
of \cite[Thm.~4.2]{TU} give
\[
\lim_{m\to\infty}\frac{c_1^2(S)}{c_2(S)}=\Lambda(X,\sig):=
\frac{c_1^2(X)+24\sig^2+12\,\sig\cdot K_X}{c_2(X)+18\sig^2+6\,\sig\cdot K_X},
\]
independently of $Y$ and $B_0$, and letting $\sig$ be comparable in size to $K_X$ makes
$\Lambda$ a two-parameter quantity. Our input surfaces all have slope $<1/2$: the entire
interval $[1/2,2]$ is produced by the terms that \cite{TU} discard.

\section{The product construction}\label{sec:prelim}

We recall the relevant notions and the theorem of \cite{TU} on which everything rests.
Throughout, $\pr_1$ and $\pr_2$ denote the projections of a product of two surfaces, and we
do not distinguish notationally between a line bundle and the associated divisor class.

\begin{definition}[{\cite{dCM}, \cite[Def.~2.1, Def.~2.5]{TU}}]\label{def:lef}
A proper surjective morphism $\phi:X\to W$ of irreducible varieties is \emph{semismall} if
$\dim\{w\in W:\dim\phi^{-1}(w)=k\}+2k\le\dim X$ for all $k\ge0$. A line bundle $M$ on a
nonsingular projective variety $X$ is \emph{lef}\footnote{An acronym for \emph{Lefschetz
effettivamente funziona}; see \cite{dCM}.} if $|nM|$ is base point free and the associated
morphism is semismall onto its image for some $n>0$; the least such $n$ is the \emph{exponent}
$\exp(M)$.
\end{definition}

If $L$ is ample then $L$ is lef, and $\exp(L)=1$ if $L$ is very ample. We record one
consequence of the definition, used repeatedly below: \emph{if $\exp(M)=1$ then $M$ is
globally generated}. The following is the source of the lef bundles used here and in \cite{TU}.

\begin{proposition}[{\cite[Lem.~2.2, Prop.~2.6]{TU}}]\label{prop:lefcrit}
Let $g:X\to Z'$ be a generically finite morphism of nonsingular projective surfaces and let
$L'$ be very ample on $Z'$. Then $g^*L'$ is lef with $\exp(g^*L')=1$.
\end{proposition}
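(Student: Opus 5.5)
The plan is to verify the definition of lef directly with $n=1$. Write $L=g^*L'$. Since $L'$ is very ample, $|L'|$ is base point free and defines an embedding $\iota:Z'\hookrightarrow\PP^N$. Pullback of sections preserves base point freeness, so $|L|$ is base point free. Let $\phi:X\to\PP^{M}$ be the morphism defined by the complete system $|L|$ and $W=\phi(X)$. It remains to show that $\phi:X\to W$ is semismall. Then $\exp(L)=1$, since the exponent is by definition at least $1$.

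The one point needing care is that $\phi$ is defined by all of $H^0(X,L)$, not by the subspace $g^*H^0(Z',L')$. So I would first compare $\phi$ with $\iota\circ g$. The subspace $V=g^*H^0(Z',L')\subset H^0(X,L)$ is base point free, and the morphism it defines is $\iota\circ g$. Therefore $\iota\circ g=\pi\circ\phi$, where $\pi:W\to\PP^N$ is the restriction of the linear projection from $\PP^M$ determined by $V\subset H^0(X,L)$. This projection is regular on $W$ because $V$ has no base points on $X$.

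Consequently every fibre of $\phi$ lies in a fibre of $\iota\circ g$, hence in a fibre of $g$, since $\iota$ is injective. Because $g$ is a generically finite morphism of projective surfaces, all but finitely many of its fibres are finite, and the exceptional fibres are of dimension $1$, being proper closed subsets of the irreducible surface $X$. Hence:
\begin{itemize}
\item no fibre of $\phi$ has dimension $2$, and $\dim W=2$;
\item fibres of $\phi$ of dimension $1$ occur over only finitely many points of $W$.
\end{itemize}

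The semismall inequalities then follow case by case. For $k=0$ the relevant set has dimension at most $2=\dim X$. For $k=1$ it is finite, giving $0+2\le 2$. For $k=2$ it is empty. Finally, $\phi:X\to W$ is proper and surjective onto the irreducible variety $W$.

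I expect no real obstacle here. The only subtle step is the factorization through a linear projection, which is needed because $|g^*L'|$ may have more sections than those pulled back from $Z'$.
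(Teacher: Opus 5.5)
Your argument is correct and is essentially the paper's route. The paper notes that a generically finite surjection of surfaces has positive-dimensional fibres over only finitely many points, hence is semismall \cite[Lem.~2.2]{TU}, and then cites \cite[Prop.~2.6]{TU} to pass from $g$ to the morphism of the complete system $|g^*L'|$; you carry out that cited step by hand through the factorization $\iota\circ g=\pi\circ\phi$ with your linear projection $\pi$. One step you state without justification is that only finitely many $w$ have one-dimensional fibre; it follows from the same dimension count applied to the generically finite surjection $\phi:X\to W$ itself.
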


Indeed $g$ is proper, and generic finiteness forces $\dim g(X)=2$, so $g(X)=Z'$; a surjective
morphism of surfaces has positive-dimensional fibres over a finite set only, hence is
semismall for dimension reasons \cite[Lem.~2.2]{TU}, and \cite[Prop.~2.6]{TU} applies. The underlying Lefschetz theorem for the homotopy groups of a hyperplane section of
a semismall morphism is that of Goresky--MacPherson \cite[Part~II, Thm.~1.1]{GM}.

\begin{theorem}[{\cite[Thm.~4.2]{TU}}]\label{thm:TU42}
Let $X$ and $Y$ be nonsingular projective surfaces with $K_X$ and $K_Y$ nef and $K_X^2>0$.
Let $B$ be a very ample line bundle on $Y$ and $\sig$ a lef line bundle on $X$ with
$\exp(\sig)=1$, and set $M:=\pr_1^*\sig\otimes\pr_2^*B$ on $X\times Y$. Then $M$ is lef of
exponent $1$, and for general $E,E'\in|M|$ the intersection $S:=E\cap E'$ is a nonsingular
projective surface with the following properties.
\begin{enumerate}[label=\textup{(\arabic*)}]
\item $\pi_1(S)\simeq\pi_1(X)\times\pi_1(Y)$.
\item Setting $c(\sig,B)=\frac72\sig^2B^2+\frac32(\sig\cdot K_X)B^2+\frac32(B\cdot K_Y)\sig^2
+\frac12(\sig\cdot K_X)(B\cdot K_Y)$,
\begin{align*}
c_1^2(S)&=c_1^2(X)B^2+c_1^2(Y)\sig^2+8c(\sig,B)-4\sig^2B^2,\\
c_2(S)&=c_2(X)B^2+c_2(Y)\sig^2+4c(\sig,B)+4\sig^2B^2 .
\end{align*}
\item $K_S$ is big and nef.
\end{enumerate}
\end{theorem}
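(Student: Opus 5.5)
The plan is to reduce everything to three standard inputs: the Goresky--MacPherson Lefschetz theorem for semismall maps, which the paper already quotes after Proposition \ref{prop:lefcrit}; Bertini's theorem; and adjunction together with the normal bundle sequence for a complete intersection. I would take the assertions in order.

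\textbf{Step 1: $M$ is lef of exponent $1$.} Since $\exp(\sig)=1$ and $B$ is very ample, both are globally generated. Hence $M=\pr_1^*\sig\otimes\pr_2^*B$ is globally generated, and its morphism is
\[
\phi_M=\mathrm{Segre}\circ(\phi_\sig\times\phi_B).
\]
The Segre map is an embedding, so it suffices to check that $\phi_\sig\times\phi_B$ is semismall. The map $\phi_B$ is finite, indeed an embedding. The fibre of the product over a point $(w,y)$ is $\phi_\sig^{-1}(w)\times\{y\}$. So the locus where the fibre has dimension $k$ is $W_k\times\phi_B(Y)$, where $W_k$ is the corresponding locus for $\phi_\sig$. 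Then
\[
\dim(W_k\times\phi_B(Y))+2k=\dim W_k+2+2k\le \dim X+2=\dim(X\times Y),
\]
using the semismall inequality for $\phi_\sig$.

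\textbf{Step 2: smoothness of $S$ and the fundamental group.} For general $E\in|M|$, Bertini gives a nonsingular fourfold $E$. Goresky--MacPherson \cite{GM}, applied to the semismall $\phi_M$ and a general hyperplane, gives $\pi_1(E)\simeq\pi_1(X\times Y)=\pi_1(X)\times\pi_1(Y)$.

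To pass from $E$ to $S=E\cap E'$, I would use that $\phi_M|_E$ is again semismall for general $E$. The reason is that a general hyperplane meets each stratum $W_k$ of $\phi_M$ properly, so each stratum drops by one in dimension, as does the total dimension. Thus $M|_E$ is lef of exponent $1$, and the same Lefschetz theorem applied once more gives $\pi_1(S)\simeq\pi_1(E)$. The only delicate point is the range of the theorem: Goresky--MacPherson give an isomorphism on $\pi_1$ when the section has dimension $\ge2$. This holds here since $\dim S=2$. I expect this Lefschetz step to be the main obstacle, specifically verifying the hypotheses, semismallness after restriction and genericity, precisely enough that the isomorphism and not merely a surjection is obtained.

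\textbf{Step 3: Chern numbers and positivity of $K_S$.} The normal bundle of $S$ is $M|_S^{\oplus2}$. Adjunction gives
\[
K_S=(K_{X\times Y}+2M)|_S=\bigl(\pr_1^*(K_X+2\sig)+\pr_2^*(K_Y+2B)\bigr)\big|_S,
\]
and
\[
c(T_S)=c(T_X)\,c(T_Y)\,(1+M)^{-2}\big|_S .
\]
Every degree-$2$ class on $S$ is integrated as $\int_{X\times Y}(\,\cdot\,)\,M^2$. Using $M^2=\pr_1^*\sig^2+2\pr_1^*\sig\,\pr_2^*B+\pr_2^*B^2$ and the Künneth decomposition, the only nonzero products are those of total bidegree $(2,2)$. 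Expanding $c_1^2$ and $c_2$ this way is routine bookkeeping and should reproduce the stated formulas with the term $c(\sig,B)$.

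For positivity: $K_X+2\sig$ and $K_Y+2B$ are nef, being sums of nef classes. Their pullbacks are nef, and restriction preserves nefness, so $K_S$ is nef. Bigness then follows from $K_S^2>0$. The product
\[
K_S^2=\int_{X\times Y}\bigl(\pr_1^*(K_X+2\sig)+\pr_2^*(K_Y+2B)\bigr)^2M^2
\]
is a sum of products of nef classes, so every term is nonnegative. The term $K_X^2\,B^2$ is strictly positive because $K_X^2>0$ and $B$ is ample, so $K_S^2>0$.
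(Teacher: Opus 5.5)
Your proposal is correct and follows essentially the same route as \cite[Thm.~4.2]{TU}, which the paper quotes rather than reproves. In both, $M$ is lef of exponent $1$ as the pullback of $\OO(1,1)$ under the semismall map $\phi_\sig\times\phi_B$; the Goresky--MacPherson Lefschetz theorem is applied twice, with semismallness kept on a general $E$; the Chern numbers come from $N_S=M|_S^{\oplus2}$, and your bookkeeping does reproduce the stated formulas; and $K_S=\bigl(\pr_1^*(K_X+2\sig)+\pr_2^*(K_Y+2B)\bigr)|_S$ is nef with $K_S^2\ge K_X^2B^2>0$. One slip: a general $E\in|M|$ is a nonsingular threefold, not a fourfold, and that is also the dimension your own Lefschetz count uses.
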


\begin{lemma}\label{lem:Lambda}
In the situation of Theorem \ref{thm:TU42}, let $B_0$ be very ample on $Y$ and $B=mB_0$. Then
\begin{equation}\label{eq:Lambda}
\lim_{m\to\infty}\frac{c_1^2(S)}{c_2(S)}=\Lambda(X,\sig):=
\frac{c_1^2(X)+24\sig^2+12\,\sig\cdot K_X}{c_2(X)+18\sig^2+6\,\sig\cdot K_X},
\end{equation}
a limit independent of $Y$ and $B_0$.
\end{lemma}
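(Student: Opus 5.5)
The plan is to substitute $B=mB_0$ into the formulas of Theorem \ref{thm:TU42}(2) and group the terms by their degree in $m$. Only the intersection numbers on $Y$ depend on $m$: $B^2=m^2B_0^2$ and $B\cdot K_Y=m\,(B_0\cdot K_Y)$. Everything on $X$, namely $c_i(X)$, $\sig^2$ and $\sig\cdot K_X$, stays fixed. Write $s=\sig^2$ and $t=\sig\cdot K_X$. Then
\[
c(\sig,B)=m^2B_0^2\Bigl(\tfrac72 s+\tfrac32 t\Bigr)+m\,(B_0\cdot K_Y)\Bigl(\tfrac32 s+\tfrac12 t\Bigr),
\]
so $c(\sig,B)$ is a polynomial of degree at most $2$ in $m$. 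Its $m^2$-coefficient is $B_0^2(\frac72 s+\frac32 t)$.

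Substituting this into the two formulas shows that $c_1^2(S)$ and $c_2(S)$ are also polynomials of degree at most $2$ in $m$. Their $m^2$-coefficients are
\begin{align*}
&B_0^2\bigl(c_1^2(X)+28s+12t-4s\bigr)=B_0^2\bigl(c_1^2(X)+24s+12t\bigr),\\
&B_0^2\bigl(c_2(X)+14s+6t+4s\bigr)=B_0^2\bigl(c_2(X)+18s+6t\bigr).
\end{align*}
The remaining terms are lower order. These are the linear terms coming from $B\cdot K_Y$, the constant terms $c_1^2(Y)s$ and $c_2(Y)s$, and the cross term $\frac12 t\,(B\cdot K_Y)$. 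Since $B_0$ is very ample, $B_0^2>0$ and the factor $B_0^2$ cancels in the ratio. Dividing numerator and denominator by $m^2B_0^2$ and letting $m\to\infty$ gives $\Lambda(X,\sig)$. It depends only on $X$ and $\sig$, not on $Y$ or $B_0$.

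The only point needing care is that the leading coefficient of the denominator, $c_2(X)+18s+6t$, is nonzero, since otherwise the limit need not equal the stated quotient. I would show it is strictly positive. Since $\sig$ is globally generated (exponent $1$), it is nef, so $s=\sig^2\ge0$. Since $K_X$ is nef, $t=\sig\cdot K_X\ge0$. Finally, $X$ is minimal with $K_X$ nef and $K_X^2>0$, so it is of general type, and then $c_2(X)>0$ (for instance by Noether's formula together with $\chi(\OO_X)>0$, or by Bogomolov--Miyaoka--Yau $c_2\ge c_1^2/3>0$). Hence the denominator's leading coefficient is positive, $c_2(S)>0$ for $m\gg0$, and the limit exists and equals $\Lambda(X,\sig)$.
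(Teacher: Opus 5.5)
Your proposal is correct and is essentially the paper's proof. You substitute $B=mB_0$, extract the $m^2$-coefficients $B_0^2\bigl(c_1^2(X)+24\sig^2+12\,\sig\cdot K_X\bigr)$ and $B_0^2\bigl(c_2(X)+18\sig^2+6\,\sig\cdot K_X\bigr)$, and show the latter is positive using nefness of $\sig$ and $K_X$ together with Bogomolov--Miyaoka--Yau.

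One small caveat concerns your first justification of $c_2(X)>0$. Noether's formula with $\chi(\OO_X)>0$ alone does not give $c_2(X)=12\chi(\OO_X)-K_X^2>0$ without an upper bound on $K_X^2$. The Bogomolov--Miyaoka--Yau route you also offer, which is the one the paper uses, is the one to rely on.
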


\begin{proof}
Expand $c(\sig,B)$ in Theorem \ref{thm:TU42}(2) and collect the terms carrying $B^2$. Since
$8c(\sig,B)-4\sig^2B^2$ contributes $24\sig^2+12\,\sig\cdot K_X$ to the coefficient of $B^2$,
and $4c(\sig,B)+4\sig^2B^2$ contributes $18\sig^2+6\,\sig\cdot K_X$, one gets
\[
c_1^2(S)=B^2\big[c_1^2(X)+24\sig^2+12\,\sig\cdot K_X\big]+R_1,\qquad
c_2(S)=B^2\big[c_2(X)+18\sig^2+6\,\sig\cdot K_X\big]+R_2,
\]
where $R_1$ and $R_2$ are the remaining terms, namely $\sig^2c_1^2(Y)+(B\cdot K_Y)
[12\sig^2+4\,\sig\cdot K_X]$ and $\sig^2c_2(Y)+(B\cdot K_Y)[6\sig^2+2\,\sig\cdot K_X]$. For
$B=mB_0$ one has $B^2=m^2B_0^2$ while $B\cdot K_Y=m(B_0\cdot K_Y)$, so $R_1$ and $R_2$ are
$O(m)$ and the bracketed terms dominate. It remains to check that the second bracket is
positive: $K_X$ is nef with $K_X^2>0$, so $X$ is minimal of general type and $c_2(X)>0$ by
the Bogomolov--Miyaoka--Yau inequality, while $\sig$ is nef by Definition \ref{def:lef},
whence $\sig^2\ge0$ and $\sig\cdot K_X\ge0$.
\end{proof}

\section{Forced curves and an ampleness criterion}\label{sec:criterion}

Throughout this section $X,Y$ are nonsingular projective surfaces, $\sig$ is a globally
generated --- hence nef --- line bundle on $X$, $B$ is a line bundle on $Y$ with $B^2>0$, and
$M=\pr_1^*\sig\otimes\pr_2^*B$. Ampleness of $\sig$ is not assumed.

\begin{lemma}\label{lem:triv}
Let $\Gamma\subset X$ be an irreducible curve with $\sig\cdot\Gamma=0$. Then
$\sig|_\Gamma\simeq\OO_\Gamma$.
\end{lemma}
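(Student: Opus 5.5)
The plan is to combine global generation of $\sig$ with the vanishing of its degree on $\Gamma$. Let $\nu:\widetilde\Gamma\to\Gamma$ be the normalization and $L:=\sig|_\Gamma$. Since $\sig$ is globally generated, so is $L$: the restriction map $H^0(X,\sig)\to H^0(\Gamma,L)$ supplies, at every point $x\in\Gamma$, a section of $L$ that does not vanish at $x$. Pick $x_0\in\Gamma$ and a section $s\in H^0(X,\sig)$ with $s(x_0)\neq0$. Set $t:=s|_\Gamma\in H^0(\Gamma,L)$; it is a section not vanishing identically on the irreducible curve $\Gamma$.

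First I show $t$ vanishes nowhere on $\Gamma$. Its zero scheme $Z(t)$ is an effective Cartier divisor on $\Gamma$, since $\Gamma$ is integral and $t\ne0$. If $Z(t)$ were nonempty, then $\nu^*t$ would be a nonzero section of $\nu^*L$ vanishing somewhere on the smooth curve $\widetilde\Gamma$. This would give $\deg\nu^*L>0$. But $\deg\nu^*L=\deg L=\sig\cdot\Gamma=0$, a contradiction. An equivalent way to see it is the divisor-theoretic identity
\[
\sig\cdot\Gamma=\deg L=\deg Z(t)=\textstyle\sum_{x}\operatorname{length}\OO_{Z(t),x},
\]
which is valid for an effective Cartier divisor on an integral projective curve and is a sum of nonnegative terms. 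Either way, $Z(t)=\emptyset$.

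A nowhere-vanishing section then trivializes the line bundle: multiplication by $t$ gives an isomorphism $\OO_\Gamma\to L$. Hence $\sig|_\Gamma\simeq\OO_\Gamma$.

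The only delicate point is that $\Gamma$ may be singular, so "degree zero implies no zeros" must be justified on $\Gamma$ itself and not just on a smooth model. I handle this either with the length formula for the degree of an effective Cartier divisor on an integral curve, or by pulling back to $\widetilde\Gamma$. Pulling back works because a section of $L$ vanishes at $x$ exactly when its pullback vanishes at the points over $x$, and $\nu$ is surjective. Beyond this, the argument uses only global generation; no lef or semismall hypothesis is needed.
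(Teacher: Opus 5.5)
Your proof is correct, but it takes a different route from the paper's.

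\textbf{The paper's route.} It uses all sections at once. The globally generated bundle $\sig|_\Gamma$ equals $\phi^*\OO(1)$ for the morphism $\phi:\Gamma\to\PP^n$ it defines. Degree $0$ forces $\phi(\Gamma)$ to be a point, and the pullback of $\OO(1)$ from a point is trivial.

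\textbf{Your route.} You use a single section $t$ that is nonzero at one point. The key fact is that a nonzero section of a degree-zero line bundle on an integral projective curve has empty zero scheme. You justify this through the normalization (or the length formula), so the possible singularities of $\Gamma$ are handled explicitly. In the paper they are absorbed into the tacit step ``if $\phi(\Gamma)$ were a curve then $\deg\phi^*\OO(1)>0$''.

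\textbf{What each buys.} The paper's version gives a geometric picture: $\Gamma$ is contracted by the morphism attached to $|\sig|$. This matches the situation in Corollary \ref{cor:never}, where the relevant curves are contracted by $f$. Your version gives, as a byproduct, the dichotomy that every section of $\sig|_\Gamma$ vanishes either identically or nowhere on $\Gamma$. That is exactly the fact Proposition \ref{prop:forced} later extracts from the trivialization (``its vanishing is detected at $x_0$ alone''), so your argument would let that step be stated directly.
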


\begin{proof}
Being the restriction of a globally generated bundle, $\sig|_\Gamma$ is globally generated,
so its global sections define a morphism $\phi:\Gamma\to\PP^n$ with
$\sig|_\Gamma\simeq\phi^*\OO(1)$. Then $\deg\phi^*\OO(1)=\sig\cdot\Gamma=0$, so $\phi(\Gamma)$
cannot be a curve; it is a point, and $\sig|_\Gamma\simeq\OO_\Gamma$.
\end{proof}

\begin{proposition}[Forced curves]\label{prop:forced}
Let $\Gamma\subset X$ be an irreducible curve with $\sig\cdot\Gamma=0$, and let $E,E'$ be the
divisors of arbitrary nonzero sections $s,s'\in H^0(X\times Y,M)$. Then there exists $y_0\in
Y$ with $\Gamma\times\{y_0\}\subset E\cap E'$.
\end{proposition}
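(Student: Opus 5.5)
The plan is to restrict $M$ to $\Gamma\times Y$ and reduce to a statement about sections of $B$ on $Y$. By Lemma \ref{lem:triv}, $\sig|_\Gamma\simeq\OO_\Gamma$. Hence
\[
M|_{\Gamma\times Y}\simeq \pr_2^*B|_{\Gamma\times Y}.
\]
Here $\pr_2$ now denotes the projection $\Gamma\times Y\to Y$. The restrictions $s|_{\Gamma\times Y}$ and $s'|_{\Gamma\times Y}$ are therefore sections of $\pr_2^*B$ on $\Gamma\times Y$.

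The first step is to show that each such section is a pullback from $Y$. Let $\tilde\Gamma\to\Gamma$ be the normalization; it suffices to work on $\tilde\Gamma\times Y$. By the Künneth formula,
\[
H^0(\tilde\Gamma\times Y,\pr_2^*B)\simeq H^0(\tilde\Gamma,\OO)\otimes H^0(Y,B)=H^0(Y,B),
\]
since $\tilde\Gamma$ is a connected projective curve. So the pulled-back sections have the form $\pr_2^*t$ and $\pr_2^*t'$ with $t,t'\in H^0(Y,B)$.

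Alternatively, one can avoid the normalization. For fixed $y$, the section restricted to $\Gamma\times\{y\}$ is a section of the trivial bundle on the connected projective curve $\Gamma$, hence constant. This already shows that the zero locus of $s|_{\Gamma\times Y}$ is a union of fibres $\Gamma\times\{y\}$. Namely, it is $\Gamma\times Z(t)$, where $Z(t)\subset Y$ is the zero locus of $t$, provided $t\neq0$. If $t=0$, then $\Gamma\times Y\subset E$ entirely.

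The second step is to find a common zero $y_0$ of $t$ and $t'$. Then
\[
\Gamma\times\{y_0\}\subset E\cap E'.
\]
If either section vanishes identically this is trivial: take any point in the zero locus of the other, or any $y_0$ if both vanish. Otherwise $Z(t)$ and $Z(t')$ are effective divisors in the class $B$ on the surface $Y$, and $B^2>0$.

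I expect this to be the main obstacle, because a positive intersection number does not by itself force two divisors to meet. I will argue as follows.
\begin{itemize}
\item If $Z(t)$ and $Z(t')$ share a component, any point of that component works.
\item Otherwise their intersection product equals $B^2>0$. That product is a sum of nonnegative local intersection multiplicities at the points of $Z(t)\cap Z(t')$, so it is positive only if the intersection is nonempty.
\end{itemize}
Either way a common point $y_0$ exists, which gives the claim.
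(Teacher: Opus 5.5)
Your proposal is correct and follows essentially the paper's argument. The paper obtains $t$ directly as $s(x_0,\cdot)$ from a trivialisation of $\sig|_\Gamma$ and fibrewise constancy, which is your \emph{alternative} route. Your main route packages the same fact $H^0(\Gamma,\OO_\Gamma)=\CC$ globally via K\"unneth on $\tilde\Gamma\times Y$, and your final step using $B^2>0$ (shared component, or positive total local intersection multiplicity) is identical to the paper's.

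Two small points should be made explicit. In the case $t\equiv0$, the zero locus of $t'\neq0$ is nonempty because $B^2>0$ rules out $B\simeq\OO_Y$; the paper says this explicitly. Also, your alternative route only becomes complete once $t$ is defined independently, for instance as $s(x_0,\cdot)$ as the paper does.
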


\begin{proof}
Fix $x_0\in\Gamma$ and a trivialisation
$\tau:\sig|_\Gamma\xrightarrow{\ \sim\ }\OO_\Gamma$ afforded by Lemma \ref{lem:triv};
evaluating $\tau$ at $x_0$ trivialises the fibre $\sig_{x_0}$, and we use that induced
identification below. For $y\in Y$ write $s_y:=s|_{X\times\{y\}}\in H^0(X,\sig)$. Through
$\tau$, the restriction $s_y|_\Gamma$ becomes a global section of $\OO_\Gamma$, that is a
constant, since $\Gamma$ is projective, reduced and irreducible; a constant is determined by
its value at any one point, so its vanishing is detected at $x_0$ alone. Hence
\[
\Gamma\times\{y\}\subset E\iff s_y|_\Gamma\equiv0\iff s(x_0,y)=0 ,
\]
the last condition being independent of any choice. Restricting $s$ to
$\{x_0\}\times Y\simeq Y$, where $M$ restricts to $\sig_{x_0}\otimes B$, produces
$t:=s(x_0,\cdot)\in H^0(Y,B)$; another trivialisation of $\sig_{x_0}$ would rescale $t$ and
leave $\Div(t)$ unchanged. If $t\equiv0$ then
$Z(s):=\{y\in Y:\Gamma\times\{y\}\subset E\}$ is all of $Y$; otherwise $Z(s)=\Supp\Div(t)$,
where $\Div(t)$ is an effective divisor in $|B|$, nonzero because $B^2>0$. The same applies
to $s'$. If either $Z(s)$ or $Z(s')$ is all of $Y$ we are done, the other being nonempty.
Otherwise $\Div(t)\cdot\Div(t')=B^2>0$, so the two divisors meet, either in a common
component or in $B^2$ points counted with multiplicity. Any $y_0\in Z(s)\cap Z(s')$ has the
required property.
\end{proof}

\begin{proof}[Proof of Theorem \ref{thm:criterion}]
By Theorem \ref{thm:TU42}(3) the class $K_S$ is big and nef, so $K_S^2>0$, and by the
Nakai--Moishezon criterion it remains to determine the sign of $K_S\cdot C$ for irreducible
curves $C\subset S$. Since $K_{X\times Y}\sim\pr_1^*K_X+\pr_2^*K_Y$, adjunction applied twice
gives $K_S\sim\pr_1|_S^*(K_X+2\sig)+\pr_2|_S^*(K_Y+2B)$, so by the projection formula
\begin{equation}\label{eq:KSC}
K_S\cdot C=d_1\big(\pr_1(C)\cdot(K_X+2\sig)\big)+d_2\big(\pr_2(C)\cdot(K_Y+2B)\big),
\end{equation}
each summand being present when the corresponding image is a curve, with $d_1,d_2\ge1$ the
degrees of the induced maps; both summands are $\ge0$, since $K_X$, $\sig$, $K_Y$ and $B$ are
nef.

Suppose first that $X$ contains no curve $\Gamma$ with $K_X\cdot\Gamma=\sig\cdot\Gamma=0$. If
$\pr_2(C)$ is a curve then the second summand of \eqref{eq:KSC} is at least
$2B\cdot\pr_2(C)>0$, since $B$ is very ample and $K_Y$ is nef; hence $K_S\cdot C>0$. If
$\pr_2(C)$ is a point then $C\subset X\times\{y\}$ for some $y$, the projection $\pr_1$ maps
$C$ isomorphically onto $\Gamma:=\pr_1(C)$, and $K_S\cdot C=\Gamma\cdot(K_X+2\sig)$. As $K_X$
and $\sig$ are nef and, by hypothesis, not both trivial on $\Gamma$, this is positive.
Therefore $K_S\cdot C>0$ for all $C$, and $K_S$ is ample.

Conversely, suppose $\Gamma\subset X$ is irreducible with $K_X\cdot\Gamma=\sig\cdot\Gamma=0$.
Since $\exp(\sig)=1$, the bundle $\sig$ is globally generated, so Proposition
\ref{prop:forced} applies and yields $y_0\in Y$ with $C_0:=\Gamma\times\{y_0\}\subset S$.
Then $K_S\cdot C_0=\Gamma\cdot(K_X+2\sig)=0$ by \eqref{eq:KSC}, so $K_S$ is not ample. Note
that Proposition \ref{prop:forced} needs only $\sig\cdot\Gamma=0$, which is what makes
Corollary \ref{cor:never} independent of the choice of sections.
\end{proof}

\begin{proof}[Proof of Corollary \ref{cor:never}]
In the construction of \cite{RU,TU} the branch divisor has
$t_{2,1}=6\beta^4p^4+36\alpha^2\beta^2p^4+\cdots>0$ nodes at which the two branch
multiplicities coincide \cite[\S3]{TU}. Over such a node lies a cyclic quotient singularity of
type $\frac1p(1,p-1)$, whose minimal resolution is a chain of $p-1$ smooth rational curves of
self-intersection $-2$. Let $R$ be one of them. Then $K_{X_p}\cdot R=0$ by adjunction, and
$R$ is contracted by the morphism $f:X_p\to Y_n$, so with $\sig_p=f^*L$ the projection
formula gives $\sig_p\cdot R=L\cdot f_*R=0$. Theorem \ref{thm:criterion} applies.
\end{proof}

\begin{corollary}\label{cor:easy}
In the situation of Theorem \ref{thm:TU42}, if $K_X$ is ample then $K_S$ is ample.
\end{corollary}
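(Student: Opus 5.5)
This is the sufficiency half of Theorem \ref{thm:criterion} in the case where the hypothesis on $\Gamma$ is automatic. I plan to check that hypothesis and then invoke the theorem, with no new geometry needed.

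First, the setting: we are in the situation of Theorem \ref{thm:TU42}, so $K_X$ nef with $K_X^2>0$, $B$ very ample, $\sig$ lef of exponent $1$, and $S=E\cap E'$ for general $E,E'\in|M|$; these are exactly the assumptions under which Theorem \ref{thm:criterion} was stated and proved. Second, I verify its hypothesis. If $K_X$ is ample, then $K_X\cdot\Gamma>0$ for every irreducible curve $\Gamma\subset X$ by the Nakai--Moishezon criterion (or simply because an ample divisor has positive degree on every curve). Hence no irreducible curve satisfies $K_X\cdot\Gamma=0$, and a fortiori none satisfies $K_X\cdot\Gamma=0$ together with $\sig\cdot\Gamma=0$. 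Theorem \ref{thm:criterion} then gives that $K_S$ is ample.

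As a safeguard, I would also point out that only the first half of that proof is used, and it can be read off directly. By \eqref{eq:KSC}, $K_S\cdot C>0$ whenever $\pr_2(C)$ is a curve, because $B$ is very ample. Otherwise $C\simeq\Gamma\subset X\times\{y\}$ and $K_S\cdot C=\Gamma\cdot(K_X+2\sig)\ge K_X\cdot\Gamma>0$, since $\sig$ is nef. Combined with $K_S^2>0$ from Theorem \ref{thm:TU42}(3), Nakai--Moishezon yields ampleness.

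There is no real obstacle. The only point to watch is that the statement invokes the criterion under exactly the hypotheses of Theorem \ref{thm:TU42}, which it does.
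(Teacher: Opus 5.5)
Your proposal is correct and follows the paper's proof: ampleness of $K_X$ forces $K_X\cdot\Gamma>0$ for every irreducible curve $\Gamma$, so no curve meets the condition of Theorem~\ref{thm:criterion} and its sufficiency direction gives that $K_S$ is ample. Your ``safeguard'' paragraph just repeats the Nakai--Moishezon argument from the sufficiency half of that theorem's proof.
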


\begin{proof}
If $K_X$ is ample then $K_X\cdot\Gamma>0$ for every irreducible curve $\Gamma\subset X$, so
no curve satisfies the condition of Theorem \ref{thm:criterion}.
\end{proof}

\section{The input surfaces}\label{sec:input}

We use the following classical construction. Let $Z$ be a nonsingular projective surface, $L$
a line bundle on $Z$ and $D\in|2L|$ a smooth curve. Inside the total space of $L$, the zero
locus of $z^2-s_D$, where $z$ is the tautological section and $s_D$ defines $D$, is a
nonsingular projective surface $X$ carrying a finite degree-two morphism $\pi:X\to Z$
branched exactly along $D$, and
\begin{equation}\label{eq:dc}
K_X=\pi^*(K_Z+L),\qquad e(X)=2e(Z)-e(D).
\end{equation}
The first identity is Riemann--Hurwitz, the second the additivity of the Euler
characteristic. We record the two properties that matter here.

\begin{lemma}\label{lem:dcprops}
With the above notation:
\begin{enumerate}[label=\textup{(\alph*)}]
\item if $K_Z+L$ is ample, then $K_X$ is ample;
\item if $Z$ is simply connected and $D$ is smooth and ample, then $X$ is simply connected.
\end{enumerate}
\end{lemma}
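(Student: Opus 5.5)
For (a), I would use that $\pi$ is a finite morphism and then apply the first identity in \eqref{eq:dc}. The pullback of an ample line bundle under a finite morphism is ample. One way to see this is the Nakai--Moishezon criterion. First, $(\pi^*A)^2=2A^2>0$. Second, for every irreducible curve $\Gamma\subset X$ the image $\pi(\Gamma)$ is a curve, since $\pi$ is finite and so contracts nothing. The projection formula then gives $\pi^*A\cdot\Gamma=A\cdot\pi_*\Gamma=\deg(\Gamma/\pi(\Gamma))\,A\cdot\pi(\Gamma)>0$. Taking $A=K_Z+L$ and using $K_X=\pi^*(K_Z+L)$, we get that $K_X$ is ample.

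For (b), I would compare $\pi_1(X)$ with the fundamental group of the complement of the branch locus. Write $R=\pi^{-1}(D)\simeq D$ for the ramification curve. Then $X\setminus R\to Z\setminus D$ is an unramified double cover, and $\pi_1(X)$ is a quotient of $\pi_1(X\setminus R)$, because removing a complex curve from a smooth surface induces a surjection on $\pi_1$. So it suffices to control $\pi_1(Z\setminus D)$. Since $Z$ is simply connected, $\pi_1(Z\setminus D)$ is normally generated by a meridian $\gamma$ of $D$; here we use that $D$ is smooth and irreducible, which holds because an ample smooth divisor on a surface is connected by Lefschetz/Hodge index. The double cover corresponds to the homomorphism $\pi_1(Z\setminus D)\to\mathbb{Z}/2$ sending $\gamma\mapsto1$. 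Consequently $\pi_1(X\setminus R)$ is the index-two subgroup, and its image in $\pi_1(X)$ is generated by conjugates of $\gamma^2$, which becomes the meridian of $R$, and of elements of the form $g\gamma g^{-1}\gamma^{-1}$, etc. That bookkeeping is messy. I would instead prove the cleaner statement that the abelianisation or a Zariski-type argument suffices, and at this step I expect trouble: in general $\pi_1(Z\setminus D)$ need not be abelian.

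My preferred route is therefore a Lefschetz-type argument in the total space. Ampleness of $D$ (hence of $L$) should force $\pi_1(Z\setminus D)$ to be abelian, in fact cyclic. This is the Nori/Fulton--Lazarsfeld type theorem: for a smooth ample curve $D$ with $D^2>0$ in a simply connected surface, $\pi_1(Z\setminus D)$ is abelian, and by Poincaré--Lefschetz duality $H_1(Z\setminus D)\simeq\mathbb{Z}/(\text{divisibility of }D)$. Since $D=2L$ has even divisibility, the index-two subgroup is generated by $\gamma^2$. Its image in $\pi_1(X)$ is the meridian loop of $R$, which is trivial in $X$. Hence $\pi_1(X)=1$.

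The main obstacle is justifying that $\pi_1(Z\setminus D)$ is abelian; this is where I would invoke Nori's theorem (with $D$ smooth, $D^2>0$). If that is unavailable, the fallback is the alternative that $X$ is an ample divisor in the total space of $L$ compactified to the $\PP^1$-bundle $\PP(\OO\oplus L)$. In that case I would apply the Lefschetz hyperplane theorem to conclude that $\pi_1(X)\simeq\pi_1(\PP(\OO\oplus L))\simeq\pi_1(Z)=1$, after checking that $X$, which avoids the section at infinity, is ample there, using ampleness of $L$.
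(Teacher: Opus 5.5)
Part (a) is the paper's argument; you only spell out the finite-pullback fact via Nakai--Moishezon. For (b) the paper just cites Cornalba's theorem on cyclic covers branched along a smooth ample divisor. Your main route is genuinely different, and it is correct. Since $\pi_1(Z)=1$ and $D$ is smooth and irreducible with $D^2>0$, Nori's theorem makes $\pi_1(Z\setminus D)$ abelian. Hence it equals $H_1(Z\setminus D)\simeq\mathbb{Z}/d$, where $d$ is the divisibility of $[D]$; this uses $H^3(Z)=0$ and that $H^2(Z,\mathbb{Z})$ is torsion-free with unimodular intersection form. The integer $d$ is even because $D=2L$, so $\pi_1(X\setminus R)=\langle\gamma^2\rangle$. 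Its generator lifts to a conjugate of a meridian of $R$, which bounds a disc in $X$. Surjectivity of $\pi_1(X\setminus R)\to\pi_1(X)$ then gives $\pi_1(X)=1$.

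The trade-off is this. Cornalba yields $\pi_1(X)\simeq\pi_1(Z)$ in any dimension, with no hypothesis on $\pi_1(Z)$. Your argument is specific to surfaces and uses $\pi_1(Z)=1$ essentially: it makes all of $\pi_1(Z\setminus D)$ abelian, not just the kernel to $\pi_1(Z)$. In exchange it shows the mechanism explicitly: abelian complement, even divisibility, meridian of $R$ killed. You were right to abandon the first attempt, since normal generation by an element of order two does not control the group.

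Drop the fallback, though, because its key claim is false. $X$ lies in the total space of $L$, so it is disjoint from the section at infinity $\Sigma_\infty\simeq Z$ of $\PP(\OO\oplus L)$. Hence $X\cdot C=0$ for every curve $C\subset\Sigma_\infty$, and $X$ is not ample there. Indeed $X\sim 2\Sigma_0$ is only nef and big; it becomes ample only after $\Sigma_\infty$ is contracted to the vertex of the projective cone over $(Z,L)$. So the ordinary Lefschetz theorem does not apply in $\PP(\OO\oplus L)$. On the cone, where $X$ is ample, the ambient space is singular at the vertex, and a Lefschetz argument would have to control that singularity. That route therefore needs real additional input, not a routine ampleness check.
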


\begin{proof}
(a) The morphism $\pi$ is finite, and the pullback of an ample line bundle under a finite
morphism is ample; apply this to $K_Z+L$ using \eqref{eq:dc}. (b) This is Cornalba
\cite{Cor}.
\end{proof}

We take $Z=\PP^1\times\PP^1$. Its N\'eron--Severi group is generated by the classes $F_1,F_2$
of the two rulings; we write $(m,n):=mF_1+nF_2$, so that
\begin{gather*}
(m,n)\cdot(m',n')=mn'+m'n,\qquad K_Z=(-2,-2),\qquad K_Z^2=8,\qquad e(Z)=4,
\end{gather*}
and $(m,n)$ is ample if and only if it is very ample, if and only if $m\ge1$ and $n\ge1$. The
surface $Z$ is simply connected.

\begin{construction}\label{constr}
Let $u,v,a,b$ be integers with $u,v,a,b\ge1$. On $Z=\PP^1\times\PP^1$ set
\[
L:=(u+2,\,v+2),\qquad D\in|2L|\ \text{a general member},
\]
let $\pi:X\to Z$ be the double cover branched along $D$, and put
\[
W:=K_Z+L=(u,v),\qquad \sig:=\pi^*\big((a,b)\big).
\]
\end{construction}

\begin{proposition}\label{prop:props}
In Construction \ref{constr}, $D$ is smooth and irreducible and:
\begin{enumerate}[label=\textup{(\alph*)}]
\item $X$ is a nonsingular projective surface with $K_X=\pi^*W$ ample; in particular $X$ is
minimal of general type, $K_X^2>0$, and $X$ contains no $(-2)$-curve;
\item $\pi_1(X)=1$;
\item $\sig$ is ample, globally generated, and lef with $\exp(\sig)=1$;
\item $c_1^2(X)=4uv$ and $c_2(X)=8uv+12u+12v+24$;
\item $\sig^2=4ab$ and $\sig\cdot K_X=2(av+bu)$.
\end{enumerate}
\end{proposition}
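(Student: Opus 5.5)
We prove the statements in order, since each uses the earlier ones.

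\emph{Smoothness of $D$.} The class $2L=(2u+4,2v+4)$ is very ample on $Z$, so Bertini makes a general $D\in|2L|$ smooth. Since $D^2=2(2u+4)(2v+4)>0$ and $D$ is ample, $D$ is connected. A smooth connected curve is irreducible.

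\emph{Part (a).} By \eqref{eq:dc}, $K_X=\pi^*(K_Z+L)=\pi^*W=\pi^*(u,v)$. Here $W$ is ample because $u,v\ge1$, so Lemma~\ref{lem:dcprops}(a) gives that $K_X$ is ample. An ample canonical class is nef with $K_X^2>0$, so $X$ is minimal of general type. If $R$ were a $(-2)$-curve, adjunction would give $K_X\cdot R=0$, which contradicts ampleness.

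\emph{Part (b).} The surface $Z$ is simply connected and $D$ is smooth and ample. Lemma~\ref{lem:dcprops}(b) then gives $\pi_1(X)=1$.

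\emph{Part (c).} The class $(a,b)$ is very ample on $Z$ because $a,b\ge1$.
\begin{itemize}
\item Ampleness: $\sig=\pi^*(a,b)$ is ample since $\pi$ is finite.
\item Global generation: the pullback of the base-point-free system $|(a,b)|$ is base-point free.
\item Lef of exponent $1$: $\pi$ is finite, hence generically finite, so Proposition~\ref{prop:lefcrit} applies with $g=\pi$ and $L'=(a,b)$.
\end{itemize}

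\emph{Parts (d) and (e).} These are intersection computations via the projection formula $\pi^*\alpha\cdot\pi^*\beta=2\,\alpha\cdot\beta$ for a degree-two map.
\begin{itemize}
\item $c_1^2(X)=2W^2=2\cdot 2uv=4uv$.
\item $\sig^2=2\cdot 2ab=4ab$.
\item $\sig\cdot K_X=2\,(a,b)\cdot(u,v)=2(av+bu)$.
\item For $c_2$, use $e(X)=2e(Z)-e(D)=8-e(D)$ from \eqref{eq:dc}. Adjunction on $Z$ gives $2g(D)-2=D\cdot(D+K_Z)$, so $e(D)=-D\cdot(D+K_Z)$.
\item With $D=(2u+4,2v+4)$ and $D+K_Z=(2u+2,2v+2)$:
\[
D\cdot(D+K_Z)=(2u+4)(2v+2)+(2u+2)(2v+4)=8uv+12u+12v+16.
\]
\item Hence $e(X)=8+8uv+12u+12v+16=8uv+12u+12v+24$, as claimed.
\end{itemize}
As a check, Noether's formula gives $\chi(\OO_X)=(12uv+12u+12v+24)/12=uv+u+v+2$, an integer.

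No step is a real obstacle. The only points needing care are connectedness of $D$ and citing Cornalba's theorem correctly; the rest is bookkeeping.
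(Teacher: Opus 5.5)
Your proposal is correct and follows the paper's proof essentially step for step. You use Bertini for $D$, Lemma~\ref{lem:dcprops}(a),(b) for ampleness of $K_X$ and simple connectivity, Proposition~\ref{prop:lefcrit} with $g=\pi$ for the lef property, and the projection formula together with $e(X)=2e(Z)-e(D)$ and adjunction on $Z$ for the invariants. The only cosmetic differences are that you get irreducibility of $D$ from connectedness of an ample divisor rather than directly from Bertini, and you compute $e(D)=-D\cdot(D+K_Z)$ without passing through $g(D)$.
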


\begin{proof}
Since $u,v\ge1$, the class $2L=(2u+4,2v+4)$ is very ample, so by Bertini a general member
$D\in|2L|$ is smooth and irreducible, and $X$ is nonsingular.

(a) By \eqref{eq:dc}, $K_X=\pi^*(K_Z+L)=\pi^*W$ with $W=(u,v)$ ample; apply Lemma
\ref{lem:dcprops}(a). A surface with ample canonical class is minimal of general type and
contains no $(-2)$-curve.

(b) $Z$ is simply connected and $D$ is smooth and ample; apply Lemma \ref{lem:dcprops}(b).

(c) Since $a,b\ge1$, the class $(a,b)$ is very ample on $Z$. The morphism $\pi$ is finite
between nonsingular projective surfaces, hence proper, surjective and generically finite, so
Proposition \ref{prop:lefcrit} applies with $g=\pi$ and $L'=(a,b)$ and yields that
$\sig=\pi^*(a,b)$ is lef with $\exp(\sig)=1$. Being the pullback of an ample, globally
generated class under a finite morphism, $\sig$ is moreover ample and globally generated. Of
these properties only $\exp(\sig)=1$ is used in the sequel; ampleness of $\sig$ is never
needed.

(d) $c_1^2(X)=K_X^2=(\pi^*W)^2=\deg(\pi)\,W^2=2\cdot 2uv=4uv$. Writing $l_1=u+2$ and
$l_2=v+2$, \eqref{eq:dc} gives $e(X)=2\cdot4-e(D)=8-(2-2g(D))=6+2g(D)$, while adjunction on
$Z$ gives
\[
2g(D)-2=D\cdot(D+K_Z)=(2l_1,2l_2)\cdot(2l_1-2,\,2l_2-2)=8l_1l_2-4l_1-4l_2 ,
\]
so $g(D)=1+4l_1l_2-2l_1-2l_2$ and $c_2(X)=8+8l_1l_2-4l_1-4l_2$. Substituting $l_1=u+2$,
$l_2=v+2$ yields $c_2(X)=8uv+12u+12v+24$.

(e) $\sig^2=\deg(\pi)\,(a,b)^2=2\cdot2ab=4ab$, and
$\sig\cdot K_X=\pi^*(a,b)\cdot\pi^*(u,v)=\deg(\pi)\,(a,b)\cdot(u,v)=2(av+bu)$.
\end{proof}

\begin{proposition}\label{prop:slope}
In Construction \ref{constr},
\[
\Lambda(X,\sig)=\frac{4uv+96ab+24(av+bu)}{8uv+12u+12v+24+72ab+12(av+bu)} .
\]
Fixing $u=A$ and $b=1$ and letting $v\to\infty$ gives
\[
\Lambda(X,\sig)\longrightarrow\Lambda_\infty(A,a):=\frac{4A+24a}{8A+12+12a},
\]
and letting $A,a\to\infty$ with $a/A\to y\in(0,\infty)$ gives
$\Lambda_\infty(A,a)\to\mu(y):=\dfrac{1+6y}{2+3y}$. The function $\mu$ is a strictly
increasing homeomorphism of $(0,\infty)$ onto $(1/2,2)$.
\end{proposition}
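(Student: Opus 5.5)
The plan is direct substitution followed by elementary limit computations and a monotonicity check for $\mu$.

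\emph{The formula for $\Lambda$.} Insert Proposition \ref{prop:props}(d),(e) into \eqref{eq:Lambda}, with $c_1^2(X)=4uv$, $c_2(X)=8uv+12u+12v+24$, $\sig^2=4ab$ and $\sig\cdot K_X=2(av+bu)$. The numerator becomes $4uv+24\cdot4ab+12\cdot2(av+bu)=4uv+96ab+24(av+bu)$. The denominator becomes $8uv+12u+12v+24+18\cdot4ab+6\cdot2(av+bu)$, which is the stated expression. Lemma \ref{lem:Lambda} already shows that the denominator is positive.

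\emph{The first limit.} Put $u=A$ and $b=1$. The numerator is $4Av+96a+24av+24A$ and the denominator is $8Av+12A+12v+24+72a+12av+12A$. Both are affine in $v$, with leading coefficients $4A+24a$ and $8A+12+12a$ respectively. The latter is positive, so dividing by $v$ and letting $v\to\infty$ gives $\Lambda_\infty(A,a)$. For the second limit, divide the numerator and denominator of $\Lambda_\infty$ by $A$. This gives $(4+24a/A)/(8+12/A+12a/A)$. As $A\to\infty$ and $a/A\to y$, this tends to $(4+24y)/(8+12y)=\mu(y)$.

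\emph{Properties of $\mu$.} The function $\mu$ is a Möbius function of $y$ with derivative
\[
\mu'(y)=\frac{6(2+3y)-3(1+6y)}{(2+3y)^2}=\frac{9}{(2+3y)^2}>0 ,
\]
so it is continuous and strictly increasing on $(0,\infty)$. Its limits are $\mu(0^+)=1/2$ and $\mu(y)\to 6/3=2$ as $y\to\infty$. By the intermediate value theorem and strict monotonicity, $\mu$ maps $(0,\infty)$ bijectively and continuously onto $(1/2,2)$. The inverse of a continuous strictly monotone map of intervals is continuous, so $\mu$ is a homeomorphism; explicitly, $y=(2t-1)/(6-3t)$.

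There is no real obstacle. The only point needing care is the bookkeeping of the coefficients $24,12,18,6$ against the factors in Proposition \ref{prop:props}(e), together with checking that the denominators stay positive so that the limits are legitimate.
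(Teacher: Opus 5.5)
Your proposal is correct and follows the paper's proof: substitute Proposition \ref{prop:props}(d),(e) into \eqref{eq:Lambda}, divide by $v$ and then by $A$ to get the two limits, and use $\mu'(y)=9/(2+3y)^2>0$ together with the endpoint limits $1/2$ and $2$. The only additions are that you write out the homeomorphism argument and the explicit inverse $y=(2t-1)/(6-3t)$, which the paper leaves implicit.
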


\begin{proof}
The displayed formula is \eqref{eq:Lambda} together with Proposition \ref{prop:props}(d),(e).
Substituting $u=A$, $b=1$ and dividing numerator and denominator by $v$ gives
$\Lambda_\infty$. Dividing numerator and denominator of $\Lambda_\infty$ by $A$ and letting
$A\to\infty$ with $a/A\to y$ gives $\mu(y)=\frac{4+24y}{8+12y}$. Finally
\[
\mu'(y)=\frac{6(2+3y)-3(1+6y)}{(2+3y)^2}=\frac{9}{(2+3y)^2}>0,
\]
with $\lim_{y\to0^+}\mu(y)=\frac12$ and $\lim_{y\to\infty}\mu(y)=2$.
\end{proof}

\section{Proof of the main theorem}\label{sec:proof}

\begin{proof}[Proof of Theorem \ref{thm:main}]
Let $G=\pi_1(Y_0)$ with $Y_0$ a nonsingular projective surface. As in \cite[Cor.~4.4]{TU}
there is a nonsingular projective surface $Y$ with $K_Y$ nef and $\pi_1(Y)\simeq G$: since
$\pi_1$ is a birational invariant we may replace $Y_0$ by a minimal model, which we take for
$Y$ if its Kodaira dimension is $\ge0$; otherwise it is $\PP^2$ or geometrically ruled over a
nonsingular projective curve $C$, so that $G\simeq\pi_1(C)$, a group depending only on the
genus of $C$. If that genus is positive we take for $Y$ a surface of \cite[Cor.~6.4]{RU},
which is minimal of general type --- hence has $K_Y$ nef --- and has fundamental group that
of a compact Riemann surface of the given genus; if the genus is $0$ then $G=1$ and any
member of Construction \ref{constr} will do, by Proposition \ref{prop:props}(a),(b). Fix a
very ample line bundle $B_0$ on $Y$.

Let $r\in[1/2,2]$ and $\varepsilon>0$. By Proposition \ref{prop:slope}, $\mu$ is a
homeomorphism of $(0,\infty)$ onto $(1/2,2)$, which is dense in $[1/2,2]$; so there is a
positive rational $y=a_0/A_0$ with $|\mu(y)-r|<\varepsilon/4$. Replacing $(A_0,a_0)$ by
$(NA_0,Na_0)$ for $N\gg0$ gives integers $A,a\ge1$ with
$|\Lambda_\infty(A,a)-\mu(y)|<\varepsilon/4$, and then taking $v\gg0$ gives
$|\Lambda(X,\sig)-\Lambda_\infty(A,a)|<\varepsilon/4$ for the surface $X$ and bundle $\sig$
of Construction \ref{constr} with parameters $(u,v,a,b)=(A,v,a,1)$; all three estimates come
from Proposition \ref{prop:slope}.

By Proposition \ref{prop:props}, $X$ is a nonsingular projective surface with $K_X$ ample ---
in particular nef and with $K_X^2>0$ --- and $\pi_1(X)=1$, while $\sig$ is lef with
$\exp(\sig)=1$. Since $K_Y$ is nef and $mB_0$ is very ample for every $m\ge1$, Theorem
\ref{thm:TU42} produces nonsingular projective surfaces $S_m\subset X\times Y$ with
$\pi_1(S_m)\simeq\pi_1(X)\times\pi_1(Y)\simeq G$, and by Lemma \ref{lem:Lambda} some $m$
satisfies $\big|c_1^2(S_m)/c_2(S_m)-\Lambda(X,\sig)\big|<\varepsilon/4$. The four estimates
together give $\big|c_1^2(S_m)/c_2(S_m)-r\big|<\varepsilon$. Finally $K_{S_m}$ is ample by
Corollary \ref{cor:easy}, so $S_m$ is minimal of general type. As $r$ and $\varepsilon$ were
arbitrary, the slopes are dense in $[1/2,2]$.
\end{proof}

\begin{proof}[Proof of Theorem \ref{thm:half3}]
Write $\mathcal{S}(G)$ for the set of Chern slopes of minimal nonsingular projective surfaces
of general type with fundamental group isomorphic to $G$. By Theorem \ref{thm:main},
$\mathcal{S}(G)$ contains a subset dense in $[1/2,2]$, and by Theorem \ref{thm:TUmain} it
contains a subset dense in $[1,3]$. Hence $\mathcal{S}(G)$ is dense in
$[1/2,2]\cup[1,3]=[1/2,3]$.
\end{proof}

\section{The interval \texorpdfstring{$[2,3]$}{[2,3]}}\label{sec:limits}

By \eqref{eq:Lambda}, $\Lambda(X,\sig)<2$ if and only if $c_1^2(X)<2c_2(X)+12\sig^2$; in
particular $\Lambda<2$ whenever the input surface has slope less than $2$. Our input surfaces
have slope $4uv/(8uv+12u+12v+24)<1/2$ by Proposition \ref{prop:props}(d), so reaching $[2,3]$
requires a different input surface.

Such inputs exist: the surfaces $X_p$ of \cite[Thm.~6.3]{RU}, simply connected and minimal of
general type, with slopes arbitrarily close to $3$. A very ample $\sig$ on $X_p$ satisfies the
criterion of Theorem \ref{thm:criterion}, so $K_S$ is ample and $\pi_1(S)\simeq\pi_1(Y)$; this
recovers Catanese \cite[Lem.~1.1]{Cat}, the route suggested in \cite{TU}.

The obstruction is quantitative. Write $\varepsilon=\sig^2/c_2(X)$ and
$t=\sig\cdot K_X/c_2(X)$; by \eqref{eq:Lambda}, an input of slope $3$ gives
\[
\Lambda(X,\sig)=\frac{3+24\varepsilon+12t}{1+18\varepsilon+6t},
\]
which tends to $3$ only as $\varepsilon\to0$ \emph{and} $t\to0$. Conjecture \ref{conj:45} on
$[2,3]$ thus reduces, within this method, to the following.

\begin{question}\label{q:va}
Do the surfaces $X_p$ of \cite[Thm.~6.3]{RU} carry very ample line bundles $\sig_p$ with
\[
\sig_p^2=o\big(c_2(X_p)\big)\qquad\text{and}\qquad \sig_p\cdot K_{X_p}=o\big(c_2(X_p)\big)?
\]
\end{question}

A positive answer would give Conjecture \ref{conj:45} on all of $[1,3]$, by Theorem
\ref{thm:criterion} and Lemma \ref{lem:Lambda}. We do not know whether such a $\sig_p$
exists. The very ample classes we can write down on $X_p$ are pulled back from a blow-up of
$\PP^2$ along a morphism of degree $p$, and are too large in both respects; indeed the Hodge
index theorem gives $\sig_p\cdot K_{X_p}\ge\sqrt{\sig_p^2\,K_{X_p}^2}$, so the second
condition fails as soon as the first does.

\end{document}